\newtheorem{observation}{Observation}
\newcommand{\floor}[1]{\left\lfloor #1\right \rfloor}
\newcommand{\ceil}[1]{\left\lceil #1 \right\rceil}
\begin{document}

\title{Rectilinear Crossings in Complete Balanced $d$-Partite $d$-Uniform Hypergraphs
}


\author{Rahul Gangopadhyay      \and
        Saswata Shannigrahi
}


\institute{Rahul Gangopadhyay \at
              IIIT Delhi, India \\
              \email{rahulg@iiitd.ac.in}           
           \and
           Saswata Shannigrahi \at
           Saint Petersburg State University, Russia\\
            \email{saswata.shannigrahi@gmail.com}  
}

\date{Received: date / Accepted: date}

\maketitle

\begin{abstract}
 In this paper, we study the embedding of a complete balanced $d$-partite $d$-uniform hypergraph with  its $nd$ vertices represented as points in general position in $\mathbb{R}^d$ and each hyperedge drawn as the convex hull of $d$ corresponding vertices. We assume that the set of vertices is partitioned into $d$ disjoint sets, each of size $n$, such that each vertex in a hyperedge is from a different set. Two hyperedges are said to be crossing if they are vertex disjoint and contain a common point in their relative interiors.  Using  Colored Tverberg theorem with restricted dimensions, we observe that such an embedding of a complete balanced $d$-partite $d$-uniform hypergraph with $nd$ vertices contains $\Omega\left((8/3)^{d/2}\right){\left({n/2}\right)^d{\left((n-1)/2\right)}^d}$ crossing pairs of hyperedges for $n \geq 3$ and sufficiently large $d$. Using  Gale transform and  Ham-Sandwich theorem, we improve this lower bound to $ \Omega\left(2^{d}\right){\left({n/2}\right)^d{\left((n-1)/2\right)}^d}$ for $n \geq 3$ and sufficiently large $d$. 
\keywords{$d$-Partite Hypergraph\and Crossing Hyperedges \and Gale Transform \and Colored Tverberg Theorem  \and Ham-Sandwich Theorem}
\end{abstract}

\section{Introduction}
\label{intro}
The rectilinear drawing of  a graph is defined as an embedding of it in $\mathbb{R}^2$ such that its vertices are represented as points in general position (i.e., no three vertices are collinear) and edges are drawn as straight line segments connecting the corresponding vertices. The rectilinear crossing number of a graph $G$, denoted by $\overline {cr}(G)$, is defined as the minimum number of  crossing pairs of edges among all rectilinear drawings of $G$. Determining the rectilinear crossing number of a graph is one of the most important problems in graph theory \cite{AC,JN,KL,Naha}. In particular, finding the rectilinear crossing numbers of  complete bipartite graphs is an active area of research \cite{KL,RT}. Let $K_{n,n}$ denote the complete bipartite graph having  $n$ vertices in each part.  For any $n \geq 5$, the best-known lower and upper bounds on $\overline {cr}(K_{n,n})$ are $\left(n(n-1)/{5}\right)\floor{n/2}\floor{(n-1)/2}$ and $\floor{n/2}^2\floor{(n-1)/2}^2$, respectively \cite {KL,Zaran}. For sufficiently large $n$, the result of  Nahas \cite{Naha} improved the lower bound on $\overline {cr}(K_{n,n})$ to $\left(n(n-1)/{5}\right)$  $\floor{n/2}\floor{(n-1)/2}+ 9.9 \times 10^{-6}n^4$. 
\par  Hypergraphs are natural generalizations of graphs. A hypergraph $H$ is a pair $(V,E)$, where $V$ is a set of vertices and $E$ is a set of  distinct subsets of $V$ called hyperedges. A hypergraph $H$ is called \textit{$d$-uniform} if each hyperedge contains $d$ vertices. A $d$-uniform hypergraph $H=(V,E)$ is said to be \textit{$d$-partite} if there exist sets $X_1,X_2, \ldots, X_d$ such that $V= \bigcup\limits_{i=1}^{d} X_{i}$, $X_i \cap X_j= \emptyset$ for any $i \neq j$, and each vertex in a hyperedge belonging to $E$ is from a different $X_i$. We call $X_i$ to be the \textit{$i^{th}$ part} of $V$. Moreover, such a $d$-partite $d$-uniform hypergraph is called  \textit{balanced} if $\left|X_1\right|=\left|X_2\right|=\ldots=\left|X_d\right|$ and  \textit{complete} if $|E| = \left|X_1 \times X_2 \times \ldots \times X_d\right|$. The complete balanced $d$-partite $d$-uniform hypergraph with $n$ vertices in each part is denoted by $K_{d\times n}^d$. For $t \geq 2$, let us denote by $K_{k_1 \times n_1+k_2 \times n_2+ \ldots+ k_t \times n_t}^d$ the complete $d$-partite $d$-uniform hypergraph if $\sum\limits_{i=1}^t k_i = d$, $n_i \neq n_{i+1}$ for all $i$ in the range $1 \leq i \leq t-1$, and each of the first $k_1 > 0$ parts contains $n_1$ vertices, each of the next $k_2 > 0$ parts contains $n_2$ vertices, $\ldots$, each of the final $k_t > 0$ parts contains $n_t$ vertices.  \par
A \textit{$d$-dimensional rectilinear drawing}  \cite{SA} of a $d$-uniform hypergraph $H=(V,E)$ is an embedding of it in $\mathbb{R}^d$ such that its vertices are represented as points  in general position (i.e., no $d+1$ vertices lie on a hyperplane) and its hyperedges are drawn as $(d-1)$-simplices formed by the corresponding vertices. In a $d$-dimensional rectilinear drawing of $H$, two hyperedges  are said to be \textit {crossing} if they are vertex disjoint and contain a common point in their relative interiors \cite{DP}. The \textit{$d$-dimensional rectilinear crossing number} of  $H$, denoted by ${\overline {cr}_d}(H)$, is defined as the minimum number of crossing pairs of hyperedges among all $d$-dimensional rectilinear drawings of $H$. Let us mention a few existing results on the $d$-dimensional rectilinear crossing number of uniform hypergraphs. Dey and Pach \cite{DP}  proved that ${\overline {cr}_d}(H)=0$ implies the total number of hyperedges in $H$ to be $O(\left|V\right|^{d-1})$. Gangopadhyay et al. \cite{GS} showed that  ${\overline {cr}_d}(H)=\Omega\left(2^d \sqrt{d}\right)\dbinom{|V|}{2d}$ when $H$ is a complete $d$-uniform hypergraph. \par
 In this paper, we first use  Colored Tverberg theorem with restricted dimensions and Lemma \ref{HDC} to observe a lower bound on $\overline{cr}_d\left(K_{d \times n}^d\right)$ for $n \geq 3$ and sufficiently large $d$. We mention this lower bound in Observation \ref{gencased}. Let us introduce a few more definitions and notations used in its proof. Two $d$-uniform hypergraphs $H_1=(V_1,E_1)$ and $H_2=(V_2, E_2)$ are \textit{isomorphic} if there is a bijection $f: V_1 \rightarrow V_2$ such that any set of $d$ vertices $\{u_1, u_2, \ldots, u_d\}$ is a hyperedge in $E_1$ if and only if $\{f(u_1), f(u_2), \ldots, f(u_d)\}$ is a hyperedge in $E_2$. A hypergraph $\tilde{H}=(\tilde{V},\tilde{E})$ is called an  \textit{induced sub-hypergraph} of $H=(V,E)$ if $\tilde{V}\subseteq V$ and $\tilde{E}$ is the set of all hyperedges in $E$ that are formed only by the vertices in $\tilde{V}$. The convex hull of a finite point set $S$  is denoted by $Conv(S)$. The convex hulls $Conv(S)$ and $Conv(S')$ of two finite point sets $S$ and $S'$ {\textit {intersect}} if they contain a common point in their relative interiors. For $u$ and $w$ in the range $2 \leq u, w \leq d$,  a $(u-1)$-simplex $Conv(U)$ spanned by a point set $U$ containing $u$ points and a $(w-1)$-simplex $Conv(W)$ spanned by a point set $W$ containing $w$ points (when these $u+w$ points are in general position in $\mathbb{R}^d$)  \textit{cross} if $Conv(U)$ and $Conv(W)$ intersect, and $U \cap W= \emptyset$ \cite{DP}.  
 \vspace{0.2cm}\\
\noindent\textbf {Colored Tverberg Theorem with restricted dimensions.}~\cite{JM1,VZ}
Let $\{C_1, C_2, \ldots, C_{k+1}\}$ be a collection of $k+1$ disjoint finite point sets in $\mathbb{R}^d$. Each of these sets is  of cardinality at least $2r-1$, where $r$ is a prime number satisfying the inequality $r(d-k)\leq d$. Then, there exist $r$ disjoint sets $S_1, S_2, \ldots, S_r$ such that $S_i\subseteq \bigcup_{j=1}^{k+1}C_j$, $\bigcap_{i=1}^{r} Conv(S_i) \neq \emptyset$ and $\left|S_i \cap C_j \right|=1$ for all $i$ and $j$ satisfying $ 1\leq i \leq r$ and $1 \leq j \leq k+1$.

\begin{lemma}\cite{AGSV}
\label{HDC}
Consider two disjoint point sets $U$ and $W$, each a subset of a set  $A$ containing 2d points in general position in $\mathbb{R}^d$, such that $|U|= u$, $|W|= 
w$, $2 \leq u, w \leq d$ and $u+w \geq d+1$. If the $(u-1)$-simplex formed by 
$U$ crosses the 
$(w-1)$-simplex formed by $W$, then the $(d-1)$-simplices formed by any 
two disjoint point sets $U'\supseteq U$ and $W' \supseteq W$ satisfying $|U'| = 
|W'| =d$ and $U', W' \subset A$  also cross.
\end{lemma}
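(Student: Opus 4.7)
The plan is to encode the crossing condition as an affine dependence among the $2d$ points of $A$ and then to use general position to perturb this dependence into one whose support is all of $A$ with the sign pattern corresponding to $U'$ and $W'$. A common point $p \in \mathrm{relint}(Conv(U)) \cap \mathrm{relint}(Conv(W))$ supplies strictly positive barycentric coefficients $(\alpha_i)_{i \in U}$ and $(\beta_j)_{j \in W}$ with $\sum_i \alpha_i a_i = \sum_j \beta_j a_j$, which I would assemble into $v \in \mathbb{R}^{2d}$ with $v_i = \alpha_i$ on $U$, $v_j = -\beta_j$ on $W$, and $v_k = 0$ elsewhere. This $v$ lies in the affine-dependency space $N = \{w \in \mathbb{R}^{2d} : \sum_k w_k a_k = 0,\ \sum_k w_k = 0\}$, and the crossing of $Conv(U')$ and $Conv(W')$ translates to the existence of some $v' \in N$ with $v'_i > 0$ on $U'$ and $v'_j < 0$ on $W'$.

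Next I would search for such a $v'$ in the form $v + \varepsilon\,\eta$ with $\eta \in N$ and $\varepsilon > 0$ small. The entries indexed by $U \cup W$ retain the correct strict signs of $v$ for sufficiently small $\varepsilon$, so the task reduces to choosing $\eta$ whose restriction to $A \setminus (U \cup W)$ is positive on $U' \setminus U$ and negative on $W' \setminus W$. By general position the $(d+1) \times 2d$ matrix whose columns are the augmented points $(a_k,1)^T$ has full rank $d+1$, so $\dim N = d-1$, while $|A \setminus (U \cup W)| = 2d - u - w \leq d - 1$ by the hypothesis $u + w \geq d + 1$.

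The crux is the claim that the coordinate projection $\pi : N \to \mathbb{R}^{A \setminus (U \cup W)}$ is surjective, since this would let me pick an $\eta$ realising any prescribed sign pattern on $A \setminus (U \cup W)$. By duality, any nontrivial linear relation on the image of $\pi$ would lift to a nonzero element of the row space of the augmented matrix vanishing on $U \cup W$, i.e., to a nonzero affine functional $f(x) = \langle \mu, x \rangle + \nu$ with $f(a_k) = 0$ for every $k \in U \cup W$. Since $u + w \geq d + 1$, such an $f$ would place at least $d + 1$ points of $A$ on a common hyperplane, contradicting general position. I expect this surjectivity step, which is exactly where the assumptions $u + w \geq d + 1$ and general position combine, to be the main substantive point of the proof; the rest is a routine perturbation. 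Selecting $\eta$ with the desired signs and $\varepsilon > 0$ small, the vector $v' = v + \varepsilon\,\eta$ then normalises on each side to give a common point in the relative interiors of $Conv(U')$ and $Conv(W')$, yielding the required crossing.
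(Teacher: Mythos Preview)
The paper does not actually supply a proof of Lemma~\ref{HDC}; it merely quotes the statement from \cite{AGSV} and uses it as a black box. So there is no in-paper argument to compare against.

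Your proposal is correct. The reduction to an affine dependence $v \in N = \{w : \sum_k w_k a_k = 0,\ \sum_k w_k = 0\}$, the dimension count $\dim N = d-1 \geq 2d - u - w = |A \setminus (U \cup W)|$, and the surjectivity of the coordinate projection $\pi : N \to \mathbb{R}^{A \setminus (U \cup W)}$ all go through as you describe. The duality step is right: a nonzero functional annihilating $\pi(N)$ extends by zeros to a vector in $N^\perp = \mathrm{row}(M)$, hence is of the form $k \mapsto \langle \mu, a_k\rangle + \nu$ and vanishes on all of $U \cup W$; since $|U \cup W| \geq d+1$ this forces $(\mu,\nu)=0$ by general position. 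Having surjectivity, you may prescribe $\eta$ to be strictly positive on $U'\setminus U$ and strictly negative on $W'\setminus W$, and for small $\varepsilon>0$ the vector $v' = v + \varepsilon\eta$ has the desired strict sign pattern on the full partition $U' \sqcup W' = A$. Normalising each side yields strictly positive barycentric coordinates, so the common point lies in the relative interiors of $Conv(U')$ and $Conv(W')$, and the simplices cross.

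This argument is in fact the natural Radon/Gale-dual way to prove the lemma and is very much in the spirit of the techniques the paper itself uses (cf.\ Lemma~\ref{Radonpart}), so it is a good fit even if no direct comparison with \cite{AGSV} is possible here.
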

\begin{observation}\label{gencased}
 $\overline{cr}_d\left(K_{d \times n}^d\right) = \Omega\left((8/3)^{d/2}\right){\left({n/2}\right)^d{\left((n-1)/2\right)}^d}$ for $n \geq 3$ and sufficiently large $d$.
\end{observation}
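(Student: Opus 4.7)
The plan is to lower bound the number of crossings via a double-counting argument: the Colored Tverberg theorem supplies one low-dimensional crossing inside each suitably chosen sub-hypergraph, and Lemma~\ref{HDC} promotes that low-dimensional crossing into many crossings of hyperedges of $K_{d \times n}^d$. Setting $k = \lceil d/2 \rceil$, the prime $r = 2$ satisfies $r(d - k) \leq d$, so the Colored Tverberg theorem is applicable to $k+1$ color classes of size $2r - 1 = 3$. I would call a \emph{setup} a choice of $k + 1$ of the $d$ parts (the \emph{chosen} parts), together with $3$ vertices from each chosen part and $2$ vertices from each of the remaining $d - k - 1$ parts; the total number of setups is
\[
S \;=\; \binom{d}{k+1}\binom{n}{3}^{k+1}\binom{n}{2}^{d-k-1}.
\]

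For each setup, applying the Colored Tverberg theorem to the $k+1$ color classes of size $3$ produces two disjoint sets $S_1, S_2$, each of size $k + 1$ and meeting every chosen part once, whose convex hulls cross. Let $A$ be the $2d$-vertex set $S_1 \cup S_2$ together with the $2$ vertices drawn from each non-chosen part; $A$ has exactly two vertices per part. Because $|S_1| + |S_2| = 2(k+1) \geq d + 2$, Lemma~\ref{HDC} applies and every disjoint pair $U' \supseteq S_1$, $W' \supseteq S_2$ of $d$-subsets of $A$ consists of crossing hyperedges of $K_{d \times n}^d$. Splitting the two vertices of each non-chosen part between $U'$ and $W'$ in one of two ways gives $2^{d - k - 1}$ such extensions, which produce $2^{d - k - 1}$ distinct unordered crossings, since two extensions yielding the same unordered pair would force $S_1 \cup S_2 \subseteq U'$, impossible because $2(k+1) > d$.

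Let $T$ count the (setup, yielded crossing) pairs, so that $T \geq S \cdot 2^{d-k-1}$. Conversely, a setup that yields a given crossing $\{e_1, e_2\}$ must contain the $2d$ vertices of $e_1 \cup e_2$, leaving only the freedom to pick the $k+1$ chosen parts and one ``leftover'' third vertex per chosen part from the $n - 2$ vertices outside $e_1 \cup e_2$; hence $T \leq C \cdot \binom{d}{k+1}(n-2)^{k+1}$, where $C = \overline{cr}_d(K_{d \times n}^d)$. The factor $\binom{d}{k+1}$ cancels, and substituting $\binom{n}{3}/(n-2) = n(n-1)/6$, $\binom{n}{2} = n(n-1)/2$, together with $4^d / 6^{d/2} = (8/3)^{d/2}$, a routine simplification delivers the stated bound. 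The main obstacle will be the bookkeeping: confirming that the $2^{d-k-1}$ HDC extensions are distinct and that the setup overcount is indeed no more than $\binom{d}{k+1}(n-2)^{k+1}$, so that the final constants consolidate to the base $(8/3)^{d/2}$ rather than to something smaller.
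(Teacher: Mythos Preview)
Your proposal is correct and follows essentially the same route as the paper: apply the Colored Tverberg theorem with $r=2$ and $k=\lceil d/2\rceil$ to obtain one crossing pair of $\lceil d/2\rceil$-simplices, promote it via Lemma~\ref{HDC} to $2^{\lfloor d/2\rfloor-1}$ crossing hyperedge pairs, and double-count over sub-hypergraphs with three vertices in $k+1$ parts and two in the rest. The only cosmetic difference is that you also let the $k+1$ ``chosen'' parts vary, introducing a $\binom{d}{k+1}$ factor on both sides that cancels; the paper simply fixes the first $\lceil d/2\rceil+1$ parts and avoids this factor altogether.
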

 
 \begin{proof}
Let us consider the hypergraph $H=K_{d \times n}^d$ such that its vertices are in general position in $\mathbb{R}^d$. Let $H'=K_{(\ceil{d/2}+1) \times 3+(\floor{d/2}-1) \times 2}^d$ be an induced sub-hypergraph of it containing $3$ vertices from each of the first $\ceil{d/2}+1$ parts and 2 vertices  from each of the remaining $\floor{d/2}-1$ parts. Let  $C_i$ denote the $i^{th}$ part of the vertex set of $H'$ for each $i$ in the range $1 \leq i \leq \ceil{d/2}+1$. Note that  $C_1, C_2, \ldots, C_{\ceil{d/2}+1}$ are disjoint sets in $\mathbb{R}^d$ and each of them contains $3$ vertices. Clearly, these sets satisfy the condition of  Colored Tverberg theorem with restricted dimensions for $k=\ceil{d/2}$ and $r=2$. Since the vertices of $H'$ are in general position in $\mathbb{R}^d$,  Colored Tverberg theorem with restricted dimensions implies that there exists a  crossing pair of  $\ceil{d/2}$-simplices spanned by $U\subseteq \bigcup_{j=1}^{\ceil{d/2}+1}C_j$ and $W\subseteq \bigcup_{j=1}^{\ceil{d/2}+1}C_j$ such that $U \cap W = \emptyset$ and $\left|U \cap C_j \right| =1$, $\left|W \cap C_j\right|=1$ for each $j$ in the range $ 1\leq j \leq  \ceil{d/2}+1$. Lemma \ref{HDC} implies that $U$ and $W$ can be extended to form $2^{{\floor{d/2}}-1}$ distinct crossing pairs of $(d-1)$-simplices, where each $(d-1)$-simplex contains exactly one vertex from each part of  $H'$.  This implies that  $\overline{cr}_d\left(H'\right) \geq 2^{{\floor{d/2}}-1}$.
Note that each crossing pair of hyperedges corresponding to these $(d-1)$-simplices is contained in $(n-2)^{\ceil{d/2}+1}$ distinct induced sub-hypergraphs of $H$, each of which is isomorphic to $H'$. Moreover, there are ${\dbinom{n}{3}}^{\ceil{d/2}+1}{\dbinom{n}{2}}^{\floor{d/2}-1}$ distinct induced sub-hypergraphs of $H$, each of which is isomorphic to $H'$.  This implies $\overline{cr}_d\left(K_{d \times n}^d\right) \geq {\scriptsize 2^{\floor{d/2}-1} {\dbinom{n}{3}}^{\ceil{d/2}+1} {\dbinom{n}{2}}^{\floor{d/2}-1}\bigg /(n-2)^{\ceil{d/2}+1}}$ $={n^d(n-1)^d}/{6^{{\ceil{d/2}}+1}}= \Omega \left((8/3)^{d/2}\right)$${\left({n/2}\right)^d{\left((n-1)/2\right)}^d}$. \qed
\end{proof}
In Section \ref{gencase}, we improve the lower bound on $\overline{cr}_d\left(K_{d \times n}^d\right)$ as follows. To the best of our knowledge, this is the first non-trivial lower bound on this number.
\begin{theorem}
\label{improved}
 $ \overline{cr}_d\left(K_{d \times n}^d\right) = \Omega\left(2^{d}\right){\left({n/2}\right)^d{\left((n-1)/2\right)}^d}$ for $n \geq 3$ and sufficiently large $d$.
 \end{theorem}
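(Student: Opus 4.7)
The plan is to reduce the theorem to a constant lower bound on $\overline{cr}_d(K_{d\times 2}^d)$ via an induced-sub-hypergraph counting argument, and then establish that bound using the Gale transform together with the Ham-Sandwich theorem. Let $K_{d\times n}^d$ be embedded in general position in $\mathbb{R}^d$. Each choice of two vertices from every part yields an induced sub-hypergraph isomorphic to $K_{d\times 2}^d$; there are $\binom{n}{2}^d$ such sub-hypergraphs, and every crossing pair of hyperedges in $K_{d\times n}^d$ uses exactly two vertices in each part and therefore sits inside a unique such sub-hypergraph. Summing over all these sub-hypergraphs yields
\[
\overline{cr}_d(K_{d\times n}^d) \;\geq\; \binom{n}{2}^d \cdot \overline{cr}_d(K_{d\times 2}^d).
\]
Since $\binom{n}{2}^d = 2^d(n/2)^d((n-1)/2)^d$, the theorem reduces to proving $\overline{cr}_d(K_{d\times 2}^d)\geq 1$ for all sufficiently large $d$; i.e., that every general-position embedding of the $2d$ vertices of $K_{d\times 2}^d$ in $\mathbb{R}^d$ produces at least one crossing pair of colorful $(d-1)$-simplices.

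To prove this constant bound, label the vertices $p_{i,1},p_{i,2}$ with color $i$ and apply the Gale transform to obtain $2d$ vectors $g_{i,1},g_{i,2}\in\mathbb{R}^{d-1}$ in general position, inheriting the $d$-coloring and satisfying $\sum_i g_i = 0$. By standard Gale duality, a rainbow bipartition $(A,B)$ of the $2d$ indices (exactly one index of each color on each side) corresponds to a crossing pair of colorful $(d-1)$-simplices in the primal if and only if there is a direction $\beta\in\mathbb{R}^{d-1}$ such that the linear hyperplane $\{x:\beta\cdot x=0\}$ strictly separates $\{g_i:i\in A\}$ from $\{g_j:j\in B\}$; equivalently, $\beta$ simultaneously bisects all $d$ color pairs $\{g_{i,1},g_{i,2}\}$. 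The task is to exhibit such a direction $\beta$.

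To produce $\beta$, I would apply the Ham-Sandwich theorem in $\mathbb{R}^{d-1}$ to any $d-1$ of the $d$ color pairs, obtaining an affine hyperplane $\pi$ bisecting each. If the remaining color pair is also split by $\pi$, the resulting rainbow bipartition places $d$ Gale vectors on each side; combined with the Gale identity $\sum_i g_i=0$, this balance constraint lets one realize the separation by a linear hyperplane through the origin, and Gale duality then yields the required crossing pair. If the excluded pair is not split by $\pi$, iterate over all $d$ possible choices of which color to exclude; a parity or Borsuk-Ulam-style continuity argument on the sphere $S^{d-2}$ of hyperplane directions guarantees that for at least one such choice a fully colorful split is obtained, with the assumption of sufficiently large $d$ entering to make the relevant parity obstruction vanish.

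The principal obstacle is this last step: Ham-Sandwich in $\mathbb{R}^{d-1}$ controls only $d-1$ measures whereas we need bisection of all $d$ color pairs, and it provides an affine rather than a linear hyperplane while Gale duality requires a linear one. Reconciling these via the Gale identity $\sum_i g_i=0$ and a parity argument over the $d$ choices of excluded color is delicate --- indeed, direct Gale-transform constructions in low dimensions produce configurations of $K_{d\times 2}^d$ with no crossings, confirming that the large-$d$ hypothesis is essential. Secondary technicalities (preservation of general position under the Gale transform, full support of the resulting signed dependence, and intersection of the simplices in their relative interiors) can be handled by generic perturbation.
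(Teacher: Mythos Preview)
Your reduction to $K_{d\times 2}^d$ is clean, but the statement you reduce to is false: $\overline{cr}_d(K_{d\times 2}^d)=0$ for \emph{every} $d\ge 2$, not just small $d$. To see this, work in the Gale dual $\mathbb{R}^{d-1}$ and place both points of colour $i$ at $e_i+\epsilon u_{i,j}$ for $i\le d-1$, and both points of colour $d$ near $-(e_1+\cdots+e_{d-1})$, with generic perturbations chosen so that the $2d$ vectors sum to $0$ and every $d-1$ of them are linearly independent. For any unit vector $\beta$ some coordinate $|\beta_i|$ exceeds $1/\sqrt{d-1}$, and then both Gale points of colour $i$ lie strictly on the same side of the linear hyperplane $\beta^{\perp}$ once $\epsilon$ is small enough. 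Hence no linear hyperplane achieves a rainbow separation, and by Gale duality the corresponding primal configuration of $2d$ points in $\mathbb{R}^d$ has no crossing pair of complementary colourful $(d-1)$-simplices. So the ``parity or Borsuk--Ulam'' step you describe cannot be completed; the obstacle you flag is not a technicality but an actual obstruction.

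The paper sidesteps exactly this mismatch by taking $H'=K_{2\times 3+(d-2)\times 2}^d$, i.e.\ giving \emph{two} of the parts three vertices rather than two. These two extra vertices push the Gale transform into $\mathbb{R}^{d+1}$, where the Ham--Sandwich theorem can bisect $d+1$ sets simultaneously: the $d$ colour classes \emph{and} the singleton $\{0\}$. Bisecting the singleton forces the Ham--Sandwich hyperplane through the origin, delivering linearity for free, while the remaining $d$ bisections give the rainbow split. One then shows $\overline{cr}_d(H')\ge 1$, counts $\binom{n}{3}^{2}\binom{n}{2}^{d-2}$ copies of $H'$ in $K_{d\times n}^d$, and divides by $(n-2)^2$ for the overcount. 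Both of your obstacles---needing one more bisection than the ambient dimension allows, and needing the hyperplane to be linear---are thus resolved by the same device of adding two vertices; your proposal, working with $2d$ vertices, has no room for this and the argument cannot be salvaged along the lines you sketch.
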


\section{Techniques Used}
\label{sec:1}
Let $A=$ $< a_1,a_2, \ldots, a_m>$ be a sequence of $m \geq d+1$ points in $\mathbb{R}^d$ such that the points in $A$ affinely span $\mathbb{R}^d$. Let the coordinate of the $i^{th}$ point $a_i$ be $(x_1^i,x_2^i, \ldots, x_d^i)$. To compute a \textit{ Gale transform} \cite{GL} $D(A)$ of $A$, we consider the following matrix $M(A)$.

\begin{center}
$
M(A) = 
\begin{bmatrix}
x_1^1&x_1^2&\cdots & x_1^m  \\
x_2^1&x_2^2&\cdots & x_2^m \\
\vdots & \vdots & \vdots & \vdots\\ 
x_d^1&x_d^2&\cdots & x_d^m \\
1 & 1 & \cdots & 1
\end{bmatrix}
$
\end{center}

Note that the rank of $M(A)$ is $d+1$, since the points in $A$ affinely span $\mathbb{R}^d$. The Rank-nullity theorem \cite{Mey}
implies that the dimension of the null space of $M(A)$ 
is $m-d-1$.  Let $\{(b_1^1, b_2^1, \ldots, b_m^1),$  $(b_1^2, b_2^2, \ldots, 
b_m^2), \ldots,(b_1^{m-d-1}, b_2^{m-d-1},$ $\ldots, b_m^{m-d-1}) \}$ be a 
basis of this null space. The Gale transform $D(A)$ corresponding to this basis is the sequence of $m$ vectors $\big<(b_1^1$, $b_1^2$, $\ldots, 
b_1^{m-d-1}),$ $(b_2^1,$ $b_2^2,$ $\ldots,$ $b_2^{m-d-1}),$ $\ldots,$ $(b_m^1, 
b_m^2, 
\ldots,$ $b_m^{m-d-1})\big>$. Note that $D(A)$ can also be considered as a point sequence in 
$\mathbb{R}^{m-d-1}$, which we denote by $<g_1,g_2,, \ldots, g_m>$. 

\noindent In the following, we mention some properties of $D(A)$ that are used in Section \ref{gencase}. For the sake of completeness, we give a proof of Lemma \ref{Radonpart}. On the other hand, the proof of Lemma \ref{genposi} is straightforward and we omit it here.

\begin{lemma}\cite{GRU}
\label{genposi}
If the points 
in $A$ are in general position in $\mathbb{R}^d$, each collection of $m-d-1$ vectors in $D(A)$ spans $\mathbb{R}^{m-d-1}$. 
\end{lemma}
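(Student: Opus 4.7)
The plan is to argue the contrapositive: if some collection of $m-d-1$ Gale vectors is linearly dependent, then $d+1$ points of $A$ lie on a common affine hyperplane, contradicting general position. I would begin by collecting $g_1, g_2, \ldots, g_m$ as the rows of an $m \times (m-d-1)$ matrix $B$, so that by construction the columns of $B$ form the chosen basis of $\ker M(A)$. The key structural fact to invoke is the standard linear-algebra duality that the left null space of $B$ coincides with the row space of $M(A)$, since both equal $(\ker M(A))^{\perp}$.

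Suppose for contradiction that some $\{g_i : i \in I\}$ with $|I| = m-d-1$ is linearly dependent, witnessed by scalars $\lambda_i$ (not all zero) with $\sum_{i \in I} \lambda_i g_i = 0$. Extending by $\lambda_j = 0$ for $j \notin I$ produces a nonzero vector $\lambda \in \mathbb{R}^m$ with at least $d+1$ zero coordinates satisfying $\lambda^{T} B = 0$. By the duality above, $\lambda$ lies in the row space of $M(A)$, which is spanned by the rows $(x_k^1, x_k^2, \ldots, x_k^m)$ for $k=1,\ldots,d$ together with the all-ones row. Hence there exist coefficients $c_0, c_1, \ldots, c_d$ with
\[
\lambda_j = c_0 + c_1 x_1^j + c_2 x_2^j + \cdots + c_d x_d^j \quad \text{for every } j \in \{1,\ldots,m\}.
\]

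To finish, I would examine the $d+1$ indices $j \notin I$: at each such $j$, the affine functional $f(y) = c_0 + c_1 y_1 + \cdots + c_d y_d$ vanishes at the point $a_j$. If $(c_1, \ldots, c_d) \neq 0$, then $\{f = 0\}$ is a genuine hyperplane in $\mathbb{R}^d$ containing $d+1$ points of $A$, contradicting general position. Otherwise $\lambda$ is the constant vector $c_0 \cdot (1,1,\ldots,1)$; since it has at least one zero entry, we must have $c_0 = 0$ and hence $\lambda = 0$, contradicting our choice of a nontrivial dependence. Either way we reach a contradiction, so every $m-d-1$ vectors in $D(A)$ are linearly independent and therefore span $\mathbb{R}^{m-d-1}$.

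The substantive content is the null-space/row-space duality for the Gale transform; the only place where care is needed is the final case split, in particular remembering to rule out the degenerate case $(c_1,\ldots,c_d)=0$ so that the general-position hypothesis is actually applied to a legitimate hyperplane.
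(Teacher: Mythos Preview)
Your argument is correct: the row-space/null-space duality gives exactly the affine dependence you need, and the closing case split cleanly disposes of the possibility that the resulting affine functional is constant. There is nothing to compare against in the paper itself, since the authors explicitly omit the proof of this lemma as ``straightforward'' and cite Gr\"unbaum's \emph{Convex Polytopes}; your contrapositive via the identification of the left null space of the Gale matrix with the row space of $M(A)$ is precisely the standard argument one finds there.
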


\begin{lemma}\cite{GRU}
\label{Radonpart}
Let $h$ be a linear hyperplane, i.e., a hyperplane passing through the origin, in $\mathbb{R}^{m-d-1}$.  Let  $D^+(A) \subset D(A)$ and $D^-(A)\subset D(A)$ denote two sets of  vectors such that $\left|D^+(A)\right|, \left|D^-(A)\right| \geq 2$ and the vectors in $D^+(A)$ and $D^-(A)$ lie in the opposite open half-spaces $h^+$ and $h^-$ created by $h$, respectively. Then, the convex hull of the point set $A_a=\{a_i | a_i \in A, g_i \in D^+(A)\}$ and the convex hull of the point set $A_b=\{a_j | a_j \in A, g_j \in D^-(A)\}$ intersect.  
\end{lemma}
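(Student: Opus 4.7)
The plan is to convert the sign pattern of the Gale vectors relative to $h$ into a signed affine dependency on the original points $A$, and then split that dependency into a Radon-style equality of convex combinations, which yields a point common to the relative interiors of $\mathrm{Conv}(A_a)$ and $\mathrm{Conv}(A_b)$.

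First I would represent the linear hyperplane by a unit normal: choose a nonzero $\mathbf{c}=(c_1,c_2,\ldots,c_{m-d-1})\in\mathbb{R}^{m-d-1}$ so that $h=\{x:\mathbf{c}^{T}x=0\}$, oriented so that $\mathbf{c}^{T}x>0$ on $h^{+}$ and $\mathbf{c}^{T}x<0$ on $h^{-}$. Writing $v^{k}=(b_{1}^{k},b_{2}^{k},\ldots,b_{m}^{k})$ for the null-space basis of $M(A)$ used to define $D(A)$, I would form the linear combination $v=\sum_{k=1}^{m-d-1}c_{k}v^{k}\in\mathbb{R}^{m}$. The key observation is that $v$ still lies in the null space of $M(A)$, and its $i$-th coordinate equals $\sum_{k}c_{k}b_{i}^{k}=\mathbf{c}^{T}g_{i}$. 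Setting $\lambda_{i}:=\mathbf{c}^{T}g_{i}$, the relation $M(A)v=0$ unpacks into two identities read off row by row:
\begin{equation*}
\sum_{i=1}^{m}\lambda_{i}\,a_{i}=0\qquad\text{and}\qquad \sum_{i=1}^{m}\lambda_{i}=0,
\end{equation*}
the first from the coordinate rows and the second from the bottom row of $1$'s.

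Next I would sort the indices by the sign of $\lambda_{i}$, which by construction is dictated by which half-space $g_{i}$ lies in: $\lambda_{i}>0$ for $i\in I^{+}:=\{i:g_{i}\in D^{+}(A)\}$, $\lambda_{i}<0$ for $i\in I^{-}:=\{i:g_{i}\in D^{-}(A)\}$, and $\lambda_{i}=0$ for the remaining indices (which drop out of both sums). Putting $S:=\sum_{i\in I^{+}}\lambda_{i}$, the hypothesis $|D^{+}(A)|,|D^{-}(A)|\ge 2$ guarantees $I^{+}$ and $I^{-}$ are nonempty, so $S>0$; the second identity then forces $\sum_{i\in I^{-}}\lambda_{i}=-S$. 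Rearranging the first identity and dividing by $S$ gives
\begin{equation*}
\sum_{i\in I^{+}}\frac{\lambda_{i}}{S}\,a_{i}\;=\;\sum_{i\in I^{-}}\frac{-\lambda_{i}}{S}\,a_{i},
\end{equation*}
with both sides honest convex combinations whose coefficients are \emph{strictly} positive.

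The left side is a positive convex combination of every point of $A_{a}$ and hence lies in the relative interior of $\mathrm{Conv}(A_{a})$; by symmetry the right side lies in the relative interior of $\mathrm{Conv}(A_{b})$, and they are equal. This is the intersection demanded by the statement. The only step that requires care, rather than being bookkeeping, is the identification $\lambda_{i}=\mathbf{c}^{T}g_{i}\Leftrightarrow v=\sum_{k}c_{k}v^{k}\in\ker M(A)$; this is exactly the defining duality of the Gale transform and the place where the rank-nullity setup from the preceding paragraphs is used, so I would write it out explicitly before invoking the sign pattern.
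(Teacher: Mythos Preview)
Your proof is correct and is essentially identical to the paper's own argument: the paper also writes the hyperplane as $\sum_i \alpha_i x_i=0$, forms the corresponding linear combination $(\mu_1,\ldots,\mu_m)=\sum_k \alpha_k(b_1^k,\ldots,b_m^k)$ of the null-space basis, reads off $\mu_i>0$, $<0$, or $=0$ according to which side of $h$ the Gale vector $g_i$ lies on, and then splits the resulting affine dependency into matching convex combinations. Your notation $\lambda_i=\mathbf{c}^T g_i$ and your explicit remark that the strictly positive coefficients place the common point in the relative interiors are the only cosmetic differences.
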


\begin{proof}
Let us assume that the hyperplane 
$h$ is given by the equation $\sum\limits_{i =1}^{m-d-1} \alpha_ix_i = 0$ such that $ \alpha_i \neq 0$ for at least one $i$, and $h^+$ ($h^-$) is the positive (negative) open half-space created by it. Let $D^0(A)= \{g_k | g_k \in D(A), g_k~lies~on~h\}$. This implies that there exists a vector $(\mu_1, 
\mu_2, \ldots, \mu_m)=\alpha_1(b_1^1,$  $b_2^1, \ldots, b_m^1) + 
\alpha_2(b_1^2,$ $b_2^2,$ $\ldots,$ $b_m^2)$ $+ \ldots +$ $
\alpha_{m-d-1}(b_1^{m-d-1},$ 
$b_2^{m-d-1},$ $\ldots,$ $b_m^{m-d-1})$ such that $\mu_i>0$ for each $g_i \in D^+(A)$,  $\mu_j < 0$ for each $ g_j \in D^-(A)$ and $\mu_k=0$ for each $g_k \in D^0(A)$. Since this vector $(\mu_1, 
\mu_2, \ldots, \mu_m)$ lies in the null space  of 
$M(A)$, it satisfies the following equation.
\vspace{-0.5cm}
\begin{center}
\begin{equation*}\label{eqn:glprop1}
 \begin{bmatrix}
x_1^1&x_1^2&\cdots & x_1^m  \\
x_2^1&x_2^2&\cdots & x_2^m \\
\vdots & \vdots & \vdots & \vdots\\ 
x_d^1&x_d^2&\cdots & x_d^m \\
1 & 1 & \cdots & 1
 \end{bmatrix}
     \begin{bmatrix}
      \mu_1 \\
       \mu_2 \\
        \vdots\\
       \mu_m\\
  \end{bmatrix}
  =
  \begin{bmatrix}
       0 \\
      0 \\
      \vdots\\[2ex]
      0
  \end{bmatrix}
\end{equation*}
\end{center}
From the equation above, we obtain the following.
\begin{equation*}
\label{eq2}
\sum\limits_{i:g_i \in D^+(A)} \mu_ia_i  = 
\sum\limits_{j:g_j \in D^-(A)} -\mu_ja_j, 
\sum\limits_{i:g_i \in D^+(A)} \mu_i  = \sum\limits_{j:g_j \in D^-(A)} -\mu_j 
\end{equation*}
Rearranging the equations above, we obtain the following.
\begin{equation*}
\sum\limits_{i:g_i \in D^+(A)} {\dfrac{\mu_i}{\sum\limits_{{i:g_i \in D^+(A)}} \mu_i}}a_i  = 
\sum\limits_{j:g_j \in D^-(A)} {\dfrac{\mu_j}{\sum\limits_{j:g_j \in D^-(A)} \mu_j}}a_j 
\end{equation*}

\begin{equation*}
\sum\limits_{i:g_i \in D^+(A)} {\dfrac{\mu_i}{\sum\limits_{i:g_i \in D^+(A)} \mu_i}}  = \sum\limits_{j:g_j \in D^-(A)} {\dfrac{\mu_j}{\sum\limits_{j:g_j \in D^-(A)} \mu_j}} = 1
\end{equation*}

\noindent It shows that $Conv(A_a)$ and $Conv(A_b)$ intersect.  \qed
\end{proof}
\noindent Let us also state  Ham-Sandwich theorem here.
\par\vspace{0.4 cm}
\noindent\textbf {Ham-Sandwich Theorem.}~\cite{JM1,STO}
\textit{There exists a $(d-1)$-dimensional hyperplane $h$ which simultaneously bisects $d$ 
finite point sets $P_1, P_2, \ldots, P_d$ in $\mathbb{R}^d$, such that each of 
the 
open half-spaces created by $h$ contains at most 
$\left\lfloor\small{{|P_i|}/{2}}\right\rfloor$ 
points of $P_i$ for each $i$ in the range $1 \leq i \leq d$.}
\section {Lower Bound on the $d$-Dimensional Rectilinear Crossing Number of ${K_{d \times n}^d}$ }
\label{gencase}
\vspace{-0.2 cm}
In this section, we use  Gale transform of a sequence of points and  Ham-Sandwich theorem to improve the previously observed lower bound on the $d$-dimensional rectilinear crossing number of $K_{d \times n}^d$ for $n \geq 3$ and sufficiently large $d$. 
\vspace{0.4 cm}\\
{\bf{Proof of Theorem \ref{improved}.}}
Let us consider the hypergraph $H=K_{d \times n}^d$ such that its vertices are in general position in $\mathbb{R}^d$. Let $H'=K_{2 \times 3+ (d-2) \times 2}^d$ be an induced sub-hypergraph of it containing $3$ vertices from each of the first $2$ parts and 2 vertices  from each of the remaining $(d-2)$ parts of the vertex set of $H$. Let $V'=$ $<v_1,v_2,v_3, \ldots, v_{2d+1},v_{2d+2}>$ be a sequence of the vertices of  $H'$ such that $\{v_1,v_2,v_3\}$ belongs to the first part $L_1$, $\{v_4,v_5,v_6\}$ belongs to the second part $L_2$ and $\{v_{2k+1},v_{2k+2}\}$ belongs to the $k^{th}$ part $L_k$ for each $k$ in the range $3 \leq k \leq d$. We consider a Gale transform of $V'$ and obtain a sequence of $2d+2$ vectors $D(V')=$ $<p_1,p_2,p_3, \ldots, p_{2d+1},p_{2d+2}>$ in $\mathbb{R}^{d+1}$. It follows from Lemma \ref{genposi} that any set containing $d+1$ of these vectors spans $\mathbb{R}^{d+1}$. As mentioned before, $D(V')$ can also be considered as a sequence of points in $\mathbb{R}^{d+1}$. In order to apply  Ham-Sandwich theorem in $\mathbb{R}^{d+1}$, we color the origin with color $c_0$,  $\{p_1,p_2,p_3\}$ with  color $c_1$, $\{p_4,p_5,p_6\}$ with color $c_2$ and $\{p_{2k+1},p_{2k+2}\}$ with color $c_k$ for each $k$ in the range $3 \leq k \leq d$.
 It follows from Ham-Sandwich theorem that there exists a hyperplane $h$ such that it passes through the origin and bisects the set  colored with $c_i$ for each $i$ in the range $1\leq i \leq d$. Note that at most $d$ points of $D(V')$ lie on the linear hyperplane $h$, since any set of $d+1$ vectors in $D(V')$ spans $\mathbb{R}^{d+1}$. This implies that there exist at least $d+2$ points of $D(V')$ that lie in $h^+ \cup h^-$, where $h^+$ is the positive open half-space and $h^-$ is the negative open half-space created by $h$. Let  $D^+(V')$ and $D^-(V')$ be the two sets of points lying in $h^+$ and $h^-$, respectively. It follows from Ham-Sandwich theorem that at most $d$ points of $D(V')$ can lie in either of $h^+$ and $h^-$. This implies that $\left|D^+(V')\right| \geq 2$ and $\left|D^-(V')\right| \geq 2$. Moreover, note that $2$ points having the same color cannot lie in the same open half-space. Lemma \ref{Radonpart} implies that there exist a $(u-1)$-simplex $Conv(V'_a)$ spanned by some vertices  $V'_a \subset V'$ and a  $(w-1)$-simplex $Conv(V'_b)$ spanned by some vertices  $V'_b \subset V'$ such that the following conditions are satisfied.
\begin{enumerate}[(I)]
  \item $V'_a \cap V'_b = \emptyset$
  \item $Conv(V'_a)$ and $Conv(V'_b)$ cross. 
  \item $2 \leq |V'_a|, |V'_b| \leq d$, $|V'_a|+|V'_b| \geq d+2$
  \item $|V'_a \cap L_i | \leq 1$ for each $i$ in the range $1 \leq i \leq d$
  \item $|V'_b \cap L_i | \leq 1$ for each $i$ in the range $1 \leq i \leq d$
\end{enumerate}

Lemma \ref{HDC} implies that the crossing between $Conv(V'_a)$ and $Conv(V'_b)$ can be extended to a crossing pair of $(d-1)$-simplices spanned by  any two disjoint vertex sets $U', W' \subset V'$ satisfying $\left|U'\right|=\left|W'\right|=d$ and $U' \supseteq V'_a$ and $W' \supseteq V'_b$, respectively. In fact, it is always possible to add vertices to $V'_a$ and $V'_b$ in such a way that the following conditions hold for $U'$ and $W'$.  
\begin{enumerate}[(I)]
\item $U' \cap W' = \emptyset$
\item $Conv(U')$ and $Conv(W')$ cross. 
\item $\left|U'\right|=\left|W'\right|=d$
\item $|U' \cap L_i | =1$ for each $i$ in the range $1 \leq i \leq d$
 \item$|W' \cap L_i | = 1$ for each $i$ in the range $1 \leq i \leq d$
\end{enumerate}
\indent ~~The argument above establishes the fact that $\overline{cr}_d(H') \geq 1$. Note that  $H$ contains ${\dbinom{n}{3}}^{2}{\dbinom{n}{2}}^{d-2}$ distinct induced sub-hypergraphs, each of which is isomorphic to $H'$. Since each crossing pair of hyperedges  is contained in $(n-2)^2$ distinct induced sub-hypergraphs of $H$, each of which is isomorphic to $H'$, we obtain $\overline{cr}_d\left(K_{d \times n}^d\right) \geq {\dbinom{n}{3}}^{2}{\dbinom{n}{2}}^{d-2}\bigg /(n-2)^{2} =(1/9){n^d(n-1)^d}/{2^d}= \Omega\left(2^{d}\right){\left({n/2}\right)^d{\left((n-1)/2\right)}^d}$. \qed

\section*{Acknowledgement}
The authors would like to thank the anonymous reviewer for valuable comments and suggestions, especially for suggesting the notations used for complete $d$-partite $d$-uniform hypergraphs in this paper.

\end{document}